 \numberwithin{equation}{section}
\newtheorem{theorem}{Theorem}[section]
\newtheorem{lemma}[theorem]{Lemma}
\theoremstyle{definition}
\newtheorem{definition}[theorem]{Definition}
\newtheorem{example}[theorem]{Example}
\theoremstyle{remark}
\newtheorem{remark}[theorem]{Remark}
\numberwithin{equation}{section}
\newcommand{\M}{\mathcal{M}}
\newcommand{\C}{\mathbb{C}}
\newcommand{\A}{\mathcal{A}}
\newcommand{\B}{\mathcal{B}}
\newcommand{\N}{\mathbb{N}}   
\newcommand\D{\mathbb D} 
\newcommand\diag{\operatorname{diag}}
\newcommand\eps{\varepsilon}
\renewcommand{\Re}{\text{\normalfont Re}} 
\renewcommand{\Im}{\text{\normalfont Im}} 
\begin{document}
\parindent 0pt 
\definecolor{red}{rgb}{0.9, 0.0, 0.0}
\definecolor{magenta}{rgb}{0, 0.9, 0.0}
\pagestyle{empty} 

\setcounter{page}{1}

\pagestyle{plain}

\title{A quantum remark on biholomorphic mappings on the unit ball}
\author{Sebastian Schlei{\ss}inger}
\thanks{Supported by
the German Research Foundation (DFG), project no. 401281084.}

\date{\today}

\begin{abstract}
In this note we regard non-commutative probability theory with operator-valued expectation. 
We show that the moment generating functions of distributions coming from 
monotone increment processes of unitary random variables yield biholomorphic mappings on
certain higher dimensional unit balls. 
\end{abstract}
\subjclass[2010]{46L53, 32H02}
\keywords{monotone independence, Loewner chains, biholomorphic mappings}

\maketitle

\tableofcontents

\newpage

\section{Introduction}

In 1923, Loewner introduced a differential equation for conformal mappings to attack the Bieberbach conjecture:\\
 Let $\D=\{z\in\C\,|\, |z|<1\}$ be the unit disc and let $f:\D\to \C$ be a univalent (=holomorphic and injective)
 function normalized with 
$f(z)=z+\sum_{n\geq 2}a_nz^n$. The Bieberbach conjecture states that 
$ |a_n|\leq n$ for all $n\geq 2$. Loewner could prove the case $n=3$ (\cite{MR1512136}) 
and since then,
his approach has been extended and the Loewner differential equations are now an important tool in the 
theory of conformal mappings, in particular after the invention of SLE, the Schramm-Loewner evolution. His equations have been used also in the final proof of the Bieberbach conjecture 
by de Branges in 1985.\\

The Riemann mapping theorem -- the very foundation of geometric function theory in one dimension -- 
has no comparable counterpart in higher dimensions. Nevertheless, one can study biholomorphic mappings 
and Loewner theory also on domains in $\C^n$, or even in complex Banach spaces.\\
The most studied subdomains of $\C^n$ are the Euclidean unit ball and the polydisc $\D^n$. In this note we show that 
biholomorphic mappings and Loewner chains on certain unit balls, and thus 
also geometric function theory, appear naturally in quantum probability theory. \\
Indeed, this connection has been pointed out already for dimension one in \cite{Bau03}, \cite{bauer04}, \cite{monotone}, and \cite{FHS18}. In \cite{Jek17}, Jekel regards the higher dimensional case on upper half-spaces (the so called ``chordal'' case of the Loewner equation, 
or ``additive'' case in 
quantum probability). \\
In this paper we consider the ``radial'' case of the Loewner equation (or the ``multiplicative'' case  in quantum probability).\\

In Sections 2 and 3 we review some notions of higher dimensional Loewner theory and quantum probability theory. We show (Theorem \ref{thm:2})  that 
 distributions of certain unitary random variables embedded into monotone increment processes 
 have moment generating functions which are biholomorphic.

\section{Normalized Loewner chains on unit balls in $\C^n$}

We fix a norm $\|\cdot\|$ on $\C^n$ and let $B=\{z\in \C^n\,|\, \|z\|<1\}$ be the corresponding unit ball.
We denote by $I$ the identity matrix on $\C^n$.

\begin{definition}A (decreasing) subordination chain is a family $(f_t)_{t\geq0}$ of holomorphic functions 
$f_t:B\to \C^n$ such that $f_t = f_s\circ f_{s,t}$ for some holomorphic $f_{s,t}:B\to \C^n$ whenever $s\leq t$. \\
 A (normalized decreasing) \emph{Loewner chain} 
 on $B$ is a family $(f_t)_{t\geq0}$ of biholomorphic mappings $f_t:B\to f_t(B)$ such that 
 $f_t(B) \subset f_s(B)$ whenever $s\leq t$, $f_0(z)=z$ for all $z\in B$, and $f_t(0)=0$, $Df_t(0)=e^{-t}I$ 
 for all $t\geq 0$.
\end{definition}

Obviously, every Loewner chain is a subordination chain as $f_{s,t}$ can be defined by $f_{s,t}=f_s^{-1}\circ f_t$.
The literature usually focuses on normalized increasing Loewner chains, which simply means that we have
$f_s(B) \subset f_t(B)$ whenever $s\leq t$ and $Df_t(0)=e^tI$. 
Clearly, if $(f_t)_{t\geq0}$ is a normalized decreasing Loewner chain, then $(e^Tf_{T-t})_{0\leq t\leq T}$ 
is a normalized increasing Loewner chain for any $T>0$. \\
Normalized Loewner chains on the unit ball are intensively investigated in the literature, see the book \cite{GK03}.\\

The following class is important to set up a differential equation for normalized Loewner chains.

\begin{definition}
The class $\M(B)$ of all \emph{Herglotz functions} on $B$ 
is defined as the set of all holomorphic $h:B\to \C^n$ with $h(0)=0$, $Dh(0)=-I$
and \[ \Re(l_z(h(z))) <0 \]
for all $z\in B\setminus\{0\}$ and all linear functionals $l_z:\C^n\to\C$ with $l_z(z)=\|z\|, \|l_z\|=1$.\\
A \emph{Herglotz vector field} is a function $M:B\times [0,\infty)\to\C^n$ such that $M(\cdot,t)\in \M$ for all 
$t\geq0$ and $t\mapsto M(z,t)$ is measurable for all $z\in B$.
\end{definition}

\begin{example}
 In case of the Euclidean norm, the condition $\Re(l_z(h(z))) <0 $ becomes 
 $\Re\left<h(z),z\right><0$ for all $z\in B\setminus\{0\}$.
\end{example}

\begin{example}  \label{herglotz}
For $n=1$, every $h\in \M(\D)$ has the form $h(z)=-zp(z)$ for a holomorphic function 
$p\in \mathcal{P}(\D)$, where $\mathcal{P}(\D)$ denotes the Carath\'{e}odory class, i.e.\ the set of
 all holomorphic functions $p:\D\to \C$ with $\Re(p(z))>0$ for all  $z\in \D$ and $p(0)=1$.
The class $\mathcal{P}(\D)$ can be characterized by the Riesz-Herglotz representation formula:
\begin{equation*}   \mathcal{P}(\D) = \left\{
\int_{\partial \D} \frac{u + z}{u -z} \, \mu(du) \,|\, \mu \; 
\text{is a probability measure on $\partial \D$} \right\}.  \end{equation*}
The extreme points of the class $\mathcal{P}$ are thus given by all functions of the form
$\frac{u+z}{u-z}$ for some $u\in \partial \D$. 
\end{example} 

An important property of $\M$ is compactness.

\begin{theorem}[Theorem 6.1.39 in \cite{GK03}]
 The class $\M$ is compact with respect to locally uniform convergence.
\end{theorem}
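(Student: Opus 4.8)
The plan is to deduce compactness of $\M$ in the (metrizable) topology of locally uniform convergence on holomorphic maps $B\to\C^n$ from two facts: $\M$ is a normal family (relatively compact), and $\M$ is closed. The engine for both is a reduction to one complex variable along lines through the origin, where the classical Carath\'{e}odory class $\mathcal{P}(\D)$ of Example \ref{herglotz} takes over.

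To prove relative compactness I would first record a growth estimate. Fix $h\in\M$, a point $v\in\partial B$, and a functional $l$ with $l(v)=1=\|l\|$. Then $\zeta\mapsto l(h(\zeta v))$ is holomorphic on $\D$ and vanishes at $0$, so $l(h(\zeta v))=-\zeta q(\zeta)$ for a holomorphic $q:\D\to\C$, and $q(0)=-l(Dh(0)v)=l(v)=1$. Writing $\zeta=|\zeta|e^{i\theta}$ with $\zeta\ne0$, the functional $e^{-i\theta}l$ supports $B$ at $\zeta v$, so the Herglotz inequality reads $\Re\bigl(e^{-i\theta}l(h(\zeta v))\bigr)=\Re(-|\zeta|q(\zeta))<0$, i.e.\ $\Re q>0$; hence $q\in\mathcal{P}(\D)$, and the Riesz--Herglotz bound gives $|q(\zeta)|\le\frac{1+|\zeta|}{1-|\zeta|}$. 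Specializing to $v=z/\|z\|$ and $\zeta=\|z\|$, one obtains $|l(h(z))|\le\frac{\|z\|(1+\|z\|)}{1-\|z\|}$ for every $z\in B\setminus\{0\}$ and every supporting functional $l$ at $z$. To upgrade this to a bound on the full norm $\|h(z)\|=\sup_{\|l\|=1}|l(h(z))|$, uniform on compact subsets of $B$, one applies the one-dimensional estimate along the lines through all boundary points near $z/\|z\|$ together with the Cauchy integral formula on a small polydisc around $z$ (or simply invokes the growth estimate for $\M(B)$ recorded in \cite{GK15}). With $\M$ locally bounded, the vector-valued Montel theorem (apply the scalar Montel theorem on domains in $\C^n$ componentwise and diagonalize) yields that every sequence in $\M$ has a locally uniformly convergent subsequence.

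For closedness, let $h_k\in\M$ with $h_k\to h$ locally uniformly. Then $h$ is holomorphic, $h(0)=\lim h_k(0)=0$, and $Dh(0)=\lim Dh_k(0)=-I$ since derivatives converge locally uniformly. The inequalities $\Re(l_z(h_k(z)))<0$ pass to the limit as $\Re(l_z(h(z)))\le0$ for all $z\ne0$ and all supporting $l_z$, so the only issue is strictness. Suppose $\Re(l_{z_0}(h(z_0)))=0$ for some $z_0\ne0$ and some supporting $l_{z_0}$. With $v=z_0/\|z_0\|$, the functional $l_{z_0}$ also supports $B$ at $v$, and the reduction above gives a holomorphic $q:\D\to\C$ with $l_{z_0}(h(\zeta v))=-\zeta q(\zeta)$, $q(0)=1$, $\Re q\ge0$ on $\D$ (the inequality now non-strict), and $\Re q(\|z_0\|)=0$. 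Since $\Re q$ is a nonnegative harmonic function attaining its minimum $0$ at the interior point $\|z_0\|$, the minimum principle forces $\Re q\equiv0$, contradicting $\Re q(0)=1$. Hence the inequality is strict, $h\in\M$, and $\M$ is closed, therefore compact.

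The step I expect to be the main obstacle is the upgrade in the second paragraph: the Herglotz condition controls $h(z)$ only through the supporting functionals at $z$, and for a general — possibly non-Euclidean, possibly non-smooth — norm some care is needed to convert this into a locally uniform bound on $\|h(z)\|$ itself. Everything else is a routine transcription of one-variable facts: the Carath\'{e}odory-class bound, the minimum principle for harmonic functions, and Montel's theorem.
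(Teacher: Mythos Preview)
The paper does not prove this statement; it is quoted verbatim from \cite[Theorem 6.1.39]{GK15} and used as a black box, so there is no argument in the paper to compare yours against.

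On its own merits your outline is the standard one, and the closedness half is complete and correct, including the reduction to $\mathcal{P}(\D)$ along complex lines through the origin and the minimum-principle contradiction. The line-restriction computation producing $|l_z(h(z))|\le \|z\|(1+\|z\|)/(1-\|z\|)$ for every supporting functional $l_z$ is also right.

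The step you yourself single out as the obstacle is, however, the entire substance of the theorem, and neither of your two proposed fixes resolves it. The Cauchy-integral idea does not work as stated: on each line $\zeta\mapsto\zeta v$ your estimate controls only the single scalar $l_v(h(\zeta v))$, i.e.\ one fixed component of $h$ tied to that direction. For the Euclidean ball, for instance, the supporting functional at $z$ is unique and sees only the radial component of $h(z)$; passing to nearby lines and averaging over a small polydisc again yields only the radial components along those nearby lines, never the transverse components at the original point. Your alternative of invoking the growth estimate from \cite{GK15} is legitimate but circular in spirit: in that reference a uniform bound of the type $\|h(z)\|\le C\|z\|/(1-\|z\|)^2$ is precisely the non-trivial lemma from which compactness is then read off via Montel plus closedness, and its proof requires an idea beyond the one-variable Carath\'{e}odory reduction (one common route goes through the semigroup generated by $h$ together with Schwarz-type distortion bounds for holomorphic self-maps of $B$). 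So your sketch has the right architecture, but the step you flagged is not a routine upgrade --- it is the theorem.
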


\begin{theorem}\label{thm:0}Let $(f_t)_{t\geq0}$ be a subordination chain on $B$ satisfying $f_0(z)=z$ for all $z\in 
B$, $f_t(0)=0$, and 
$Df_t(0)=e^{-t}I$ for all $t\geq0$. 
Then $(f_t)_{t\geq0}$ is a Loewner chain and it satisfies the following partial 
differential equation:
 \begin{equation}\label{EV_Loewner}
\frac{\partial}{\partial t} f_{t}(z) = 
Df_{t}(z)\cdot M(z,t) \quad \text{for a.e. $t\geq 0$, $f_{0}(z)=z\in B,$}
\end{equation}
where $M$ is a Herglotz vector field.\\
Conversely, if $M$ is a Herglotz vector field, then there exists exactly one solution $f_t(z)$
 to \eqref{EV_Loewner} such that $t\mapsto f_t(z)$ is absolutely 
continuous for every $z\in B$, and $(f_t)_{t\geq 0}$ is a decreasing Loewner chain.
\end{theorem}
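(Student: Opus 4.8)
The statement (Theorem \ref{thm:0}) has two directions, and the standard route is to treat each separately, leaning on the compactness of $\M$ (the theorem quoted from \cite{GK15}) for the ``forward'' direction and on a Carathéodory-type existence argument for the ``converse.''

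\emph{Forward direction.} Assume $(f_t)$ is a subordination chain with $f_0=\mathrm{id}$, $f_t(0)=0$, $Df_t(0)=e^{-t}I$. First I would record that the transition maps $f_{s,t}$ are uniquely determined (since $f_s$ is injective near $0$, hence — once we know the $f_t$ are biholomorphic onto their images — globally injective) and satisfy $f_{s,t}(0)=0$, $Df_{s,t}(0)=e^{-(t-s)}I$, together with the semigroup-like relation $f_{s,u}=f_{s,t}\circ f_{t,u}$ for $s\le t\le u$. The key local step is to differentiate this in $t$: write $g_{s,t}=f_{s,t}$ and consider the ``infinitesimal generator''
\[
M(z,t)=\lim_{h\downarrow 0}\frac{f_{t,t+h}(z)-z}{h}.
\]
To make sense of this limit and to identify $M(\cdot,t)$ as a Herglotz function, I would argue that the normalized maps $h\mapsto (f_{t,t+h}(z)-z)/h$ form a normal family whose cluster points all lie in $\M$: the normalization $f_{t,t+h}(0)=0$, $Df_{t,t+h}(0)=e^{-h}I$ forces, after dividing by $h$ and letting $h\to 0$, the values $M(0,t)=0$, $DM(0,t)=-I$; and the Schwarz-type inequality $\|f_{t,t+h}(z)\|\le \|z\|$ (which holds because $f_{t,t+h}:B\to B$ fixes $0$ — this is where one needs that $f_{t,t+h}$ maps $B$ into $B$, coming from $f_{t+h}(B)\subset f_t(B)$) gives $\Re\big(l_z(f_{t,t+h}(z)-z)\big)\le 0$, hence $\Re(l_z(M(z,t)))\le 0$; strict negativity off $0$ follows from the open mapping principle / the equality case of Schwarz. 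Compactness of $\M$ then guarantees the limit exists for a.e.\ $t$ (by a monotonicity-in-$h$ argument on the relevant functionals, or by passing through a countable dense set of $z$ and a diagonal argument), and measurability in $t$ is routine. Differentiating $f_t=f_s\circ f_{s,t}$ at $t$ and using the chain rule yields \eqref{EV_Loewner}. Finally, absolute continuity of $t\mapsto f_t(z)$ comes from the locally uniform bound on $M$ supplied again by compactness of $\M$.

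\emph{Converse direction.} Given a Herglotz vector field $M$, I would solve \eqref{EV_Loewner} by the usual method: first solve the associated ODE for the transition maps, i.e.\ for fixed $s$ solve $\partial_t v_{s,t}(z)=M(v_{s,t}(z),t)$, $v_{s,s}(z)=z$; existence, uniqueness and absolute continuity on a maximal interval follow from Carathéodory's theorem since $M(\cdot,t)$ is holomorphic (hence locally Lipschitz in $z$, uniformly in $t$ by compactness of $\M$) and measurable in $t$. The Herglotz condition $\Re(l_z(M(z,t)))<0$ makes $t\mapsto\|v_{s,t}(z)\|$ non-increasing, so the flow does not leave $B$ and is global in forward time. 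One checks $v_{s,t}(0)=0$ and $Dv_{s,t}(0)=e^{-(t-s)}I$ by differentiating the ODE at $z=0$. Then set $f_t:=v_{0,t}$; the cocycle identity $v_{0,u}=v_{0,t}\circ v_{t,u}$ (uniqueness of solutions) shows $(f_t)$ is a subordination chain with the right normalizations, and injectivity of each $f_t$ plus $f_t(B)=v_{0,t}(B)\subset f_s(B)$ makes it a decreasing Loewner chain; uniqueness of the solution to \eqref{EV_Loewner} follows from uniqueness for the transition ODE.

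\emph{Main obstacle.} The delicate point is the forward direction: showing that the difference quotients $(f_{t,t+h}(z)-z)/h$ actually converge as $h\downarrow 0$ for almost every $t$, and that the limit is jointly measurable — the Herglotz \emph{membership} of each cluster point is the easy part (Schwarz lemma plus the normalization), but upgrading ``cluster point'' to ``limit a.e.'' requires the standard Loewner-theoretic monotonicity argument (a Lebesgue-point/absolute-continuity argument applied to the scalar functions $t\mapsto l(f_t(z))$ for suitable linear functionals $l$), and verifying that this transfers faithfully from the scalar one-dimensional theory to the $\C^n$ setting is where most of the care is needed. I would expect to cite the relevant machinery from \cite{GK15} here rather than reprove it.
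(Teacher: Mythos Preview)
Your forward direction never establishes that each $f_t$ is injective, which is precisely the content of ``$(f_t)$ is a Loewner chain.'' Deriving the PDE $\partial_t f_t=Df_t\cdot M$ from the difference quotients $(f_{t,t+h}-\mathrm{id})/h$ does not by itself yield injectivity; your parenthetical ``once we know the $f_t$ are biholomorphic onto their images'' is exactly the missing step, and the appeal to $f_{t+h}(B)\subset f_t(B)$ for the Schwarz inequality is circular for the same reason. The paper closes this gap by passing to the \emph{ODE} for the time-reversed transition maps $v_{s,t}:=f_{T-t,T-s}$: one shows $g_{s,t}:=(v_{s,t}-\mathrm{id})/(1-e^{t-s})\in\M$, so compactness of $\M$ gives a uniform Lipschitz bound $\|v_{s,t}(z)-v_{s,u}(z)\|\le M(r)\,|u-t|$, hence a.e.\ differentiability in $t$ (with Vitali upgrading this to all $z$), producing the initial-value problem $\partial_t v_{s,t}=H(v_{s,t},t)$, $v_{s,s}=\mathrm{id}$. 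Uniqueness of solutions to \emph{this} ODE is what forces $z\mapsto v_{s,t}(z)$ to be injective, and only afterwards is \eqref{EV_Loewner} obtained. The Lipschitz/Vitali route also replaces your vaguer ``monotonicity/diagonal'' plan for the a.e.\ existence of the limit.

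Your converse has a flow-direction error. For $\partial_t v_{s,t}=M(v_{s,t},t)$, $v_{s,s}=\mathrm{id}$, the cocycle reads $v_{s,u}=v_{t,u}\circ v_{s,t}$ (flow $s\to t$, then $t\to u$), not $v_{0,t}\circ v_{t,u}$. Hence $f_t:=v_{0,t}$ satisfies $f_u=v_{t,u}\circ f_t$ and $\partial_t f_t=M(f_t,t)$, which is neither the decreasing subordination $f_u=f_t\circ f_{t,u}$ nor the PDE \eqref{EV_Loewner}. The paper fixes this by time-reversing: set $H(z,t)=M(z,T-t)$, solve $\partial_t v_{s,t}=H(v_{s,t},t)$ on $[0,T]$ via \cite[Lemma~1.3]{GKK03}, and put $f_{0,t}:=v_{T-t,T}$; the cocycle then gives $f_{0,u}=f_{0,t}\circ v_{T-u,T-t}$, and \eqref{EV_Loewner} follows from \cite[Theorem~2.3]{GKK03}. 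Uniqueness is obtained not from ODE uniqueness for $f_t$ but by comparing the associated increasing chain $e^T f_{T-t}$ with any competitor via \cite[Corollary~5.4]{MR3043148}.
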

\begin{proof}
 Let $(f_t)_{t\geq0}$ be a subordination chain on $B$ with the given normalization and let $(f_{s,t})_{0\leq s\leq t}$ 
 be the transition mappings. We have to show that every $f_t$ is biholomorphic. \\
 From $f_t = f_s\circ f_{s,t}$ we see that $Df_{s,t}(0)=e^{s-t}I$. Furthermore, 
 $f_s$ is invertible in a neighborhood of $0$ and the identity principle implies that $f_{s,t}$ is uniquely determined. In particular, 
 $f_{s,s}(z)=z$ for all $z\in B$ and $s\geq0$.\\
 Now we fix some $T>0$.  We consider the family $(v_{s,t})_{0\leq s\leq t\leq T} := (f_{(T-t),(T-s)})_{0\leq s\leq t\leq T}$.
  Then $v_{s,t}(0)=0$, $Dv_{s,t}(0)=e^{s-t}I$, $v_{s,s}(z)=z$ and $v_{s,u} = v_{t,u}\circ v_{s,t}$ for all 
  $0\leq s\leq t\leq u\leq T$. \\
  We can now follow the proof of \cite[Theorem 2.2]{GKK03} to see that there exists a Herglotz 
  vector field 
  $H(z,t)$ such that 
  \begin{equation}\label{loewner_inv} \frac{\partial v_{s,t}(z)}{\partial t} = H(v_{s,t}(z), t)\quad 
\text{for a.e. $s\leq t\leq T$, $v_{s,s}(z)=z$}. \end{equation}

 We point out some of the crucial steps: 
   
 Define $g_{s,t}(z) = \frac{v_{s,t}(z)-z}{1-e^{s-t}}$. Then $g_{s,t}\in \M$ and the compactness of $\M$ implies that 
 $\|g_{s,t}(z)\|\leq M(r)$ for all $\|z\|\leq r$ and some $M(r)>0$. Hence,
 \[\|v_{s,t}(z)-v_{s,u}(z)\| = \|v_{s,t}(z)-v_{t,u}(v_{s,t}(z))\|\leq M(r)(1-e^{t-u})\leq M(r)(u-t).\]
Thus, for $z\in B$ fixed, we have that $t\mapsto v_{s,t}(z)$ is Lipschitz continuous and thus differentiable 
almost everywhere. An application of Vitali's theorem shows that $t\mapsto v_{s,t}(z)$ 
is differentiable for all $z$ and almost every $t \in [s,T]$. In case of differentiability, 
the Herglotz function $H(z,t)$ can be obtained 
by the limit $u\to t$, $u\geq t$, of the difference quotient 

 \begin{equation}\label{diff_quotient}
 (v_{t,u}(z)-z)/(u-t)=(1-e^{t-u})/(u-t)\cdot g_{t,u}(z)\to H(z,t).
 \end{equation}
 Then we have
 \[\frac{\partial}{\partial t} v_{s,t}(z)=\lim_{u\to t}(v_{s,u}(z)-v_{s,t}(z))/(u-t)=
 \lim_{u\to t}(1-e^{t-u})/(u-t)\cdot g_{t,u}(v_{s,t}(z))=H(v_{s,t}(z),t).\]

  The solution of \eqref{loewner_inv} is unique and produces univalent functions, see \cite[Lemma 1.3]{GKK03}. Hence, $(f_t)_{t\geq0}$ is a Loewner chain. 
  Equation \eqref{EV_Loewner} follows now from the relation 
  $f_{0,t}\circ v_{0,T-t} = v_{T-t, T}\circ v_{0,T-t}=v_{0,T}$ with $M(z,t)=H(z,T-t)$.\\

  Conversely, if a Herglotz vector field $M(z,t)$ is given, we can fix some 
  $T>0$ and define the Herglotz vector field $H(z,t)=M(z,T-t)$. 
  \cite[Lemma 1.3]{GKK03} implies that there is a unique solution $z\mapsto v_{s,t}(z)$ of \eqref{loewner_inv} and that 
  $v_{s,t}:B\to v_{s,t}(B)\subset B$ is a biholomorphic mapping. It follows that 
  $(f_{t})_{0\leq t\leq T}:=(v_{T-t, T})_{0\leq t\leq T}$ is a part of a decreasing Loewner chain satisfying \eqref{EV_Loewner}.\\
	
	Let us choose another $S>T$, to obtain a family $(\hat{f}_{t})_{0\leq t\leq S}:=(\hat{v}_{S-t, S})_{0\leq t\leq S}$ in the same way.
	Let $s\in [S-T, S]$. Then  $\hat{v}_{s,S} = v_{s+T-S,S+T-S}=v_{s+T-S,T}$. Hence 
$\hat{f}_t = \hat{v}_{S-t,S} = v_{T-t,T}=f_t$ for all $t \in [0, T]$. The two Loewner chains coincide on $[0,T]$ and  as we can choose $T>0$ arbitrarily large, we conclude that there exits a decreasing Loewner chain $(f_t)_{t\geq 0}$ satisfying \eqref{EV_Loewner}.\\

	Let $(h_{t})$ be another solution of \eqref{EV_Loewner}, locally absolutely continuous in $t$ for every $z\in B$. 
	
	Let $T>0$ and $(f_{t})_{0\leq t\leq T}=(v_{T-t, T})_{0\leq t\leq T}$ as defined above. Then
\begin{eqnarray*}
&&\frac{\partial}{\partial t}[(h_t\circ v_{0,T-t})(z)]=\\
 &&  Dh_{t}(v_{0,T-t}(z))\cdot M(v_{0,T-t}(z),t)  - Dh_t(v_{0,T-t}(z))\cdot M(v_{0,T-t}(z),t) = 0
\end{eqnarray*} 
for a.e.\ $t\leq T$ and thus $h_t(v_{0,T-t}(z))=h_0(v_{0,T}(z))=v_{0,T}(z)$ for all $z\in B$ and $0\leq t\leq T$. This implies $h_t = v_{0,T} \circ v^{-1}_{0,T-t}=v_{T-t,T}=f_t$ on $v_{0,T-t}(B)$. As $v_{0,T-t}(B)$ is an open set, the identity theorem implies 
$h_t = v_{T-t,T}=f_t$ on $B$. We can choose $T$ arbitrarily large, so $h_t=f_t$ for all $t\geq0$.
	
  
\end{proof}
%

\section{Quantum probability}

Let $\A$ be a unital $C^*$-algebra. An element $a\in \A$ is called \emph{self-adjoint} if $a^*=a$ and it is called \emph{unitary}
if $aa^*=a^*a=1$.
For $a\in \A$, the real and imaginary parts are defined by $\Re(a)=(a+a^*)/2$ and $\Im(a)=(a-a^*)/(2i)$. Note that both are self-adjoint.\\
Finally, if $a\in\A$ is self-adjoint with spectrum contained in $[0,\infty)$, 
we write $a\geq 0$ and $a$ is called positive. This is equivalent to the existence 
of $b\in \A$ with $a=b^*b$. \\
We call a self-adjoint element $a$ strictly positive, in short $a>0$, if there exists $\eps>0$ such that $a -\eps\cdot 1\geq 0$.
Then $a$ is invertible as its spectrum is contained in $[\eps,\infty)$.

\begin{example}
 Equip $\C^n$ with the Euclidean inner product and $\A=\C^{n\times n}$ with the corresponding 
 operator norm. By defining $a^*=\overline{a^T}$, $\A$ becomes a unital $C^*$-algebra.
\end{example}

We collect some simple properties of unital $C^*$-algebras.

\begin{lemma}${}$\label{calc}Let $a\in \A$.
\begin{itemize}
	\item[(a)] If $\Re(a)>0$\footnote{Note that, in general, $\Re(a)>0$ 
	is not equivalent to ``the spectrum of $a$ lies in the right half-plane''. For $a=\begin{pmatrix} \lambda & 1\\0 & \lambda\end{pmatrix}$ we have
the eigenvalue 
$\lambda$, while $\Re(a)=\begin{pmatrix} \Re(\lambda) & 1/2\\1/2 & \Re(\lambda)\end{pmatrix}$ 
has eigenvalues 
$\Re(\lambda)\pm\frac1{2}$.
}, then $a$ is invertible.
	\item[(b)] We have $\|a\|<1$ if and only if $1-aa^*>0$. 
	\item[(c)] Let $a,b\in\A$. If $a\geq 0$, then $bab^*\geq 0$.
	\item[(d)] Assume $\Re(a)> 0$ and put $w=(1-a)(1+a)^{-1}$. Then 
	$\|w\|<1$. (Note that $1+a$ is invertible by (a).) 	
		\item[(e)] Let $w\in \A$ with $\|w\|<1$ and put $b=(1-w)(1+w)^{-1}$. Then 
	$\Re(b)>0$. 	
\end{itemize}
\end{lemma}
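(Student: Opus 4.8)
The plan is to treat the four items separately, each time reducing the assertion to the order structure of $\A$ together with the spectral calculus of self-adjoint and normal elements. Item (b) is already purely spectral: by the $C^{*}$-identity $\|a\|^{2}=\|aa^{*}\|$, and since the positive element $aa^{*}$ has norm equal to $\max\operatorname{spec}(aa^{*})$, one has $\|a\|<1 \Longleftrightarrow \operatorname{spec}(aa^{*})\subset[0,1) \Longleftrightarrow \operatorname{spec}(1-aa^{*})\subset(0,1] \Longleftrightarrow 1-aa^{*}>0$.

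For (a) I would first record that $\Re(a)>0$ means $\Re(a)\ge\epsilon 1$ for some $\epsilon>0$, so $\Re(a)$ is positive and invertible and has a positive invertible square root $s:=\Re(a)^{1/2}$. Conjugating, $s^{-1}as^{-1}=1+ik$ with $k:=s^{-1}\Im(a)s^{-1}$ self-adjoint, so $\operatorname{spec}(1+ik)\subset\{\,z:\Re z=1\,\}$ misses $0$; hence $1+ik$, and therefore $a=s(1+ik)s$, is invertible in $\A$. For the second half I would use the algebraic identity $a^{-1}+(a^{-1})^{*}=a^{-1}+(a^{*})^{-1}=(a^{*})^{-1}(a+a^{*})a^{-1}$, which exhibits $2\Re(a^{-1})$ as the congruence of the strictly positive element $2\Re(a)$ by the invertible element $a^{-1}$, so that $\Re(a^{-1})\ge\epsilon\,(a^{-1})^{*}a^{-1}>0$.

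For (c) and (d) I would run one and the same type of computation. In (c), applying (a) to $1+a$ (whose real part is $1+\Re(a)\ge(1+\epsilon)1$) shows $1+a$ is invertible, so $w$ is well defined; expanding $1-ww^{*}$, using that $1-a$, $1+a$ and the relevant inverses all commute, yields the identity $(1+a)(1-ww^{*})(1+a^{*})=4\,\Re(a)$, whence $1-ww^{*}=4\,(1+a)^{-1}\Re(a)(1+a^{*})^{-1}\ge 4\epsilon\,(1+a)^{-1}(1+a^{*})^{-1}>0$, and (b) gives $\|w\|<1$. In (d), $1+w$ is invertible by the Neumann series since $\|w\|<1$, and the analogous expansion gives $(1+w^{*})(a+a^{*})(1+w)=2(1-w^{*}w)$; since $w^{*}w\le\|w\|^{2}1$ we have $1-w^{*}w\ge(1-\|w\|^{2})1>0$, hence $a+a^{*}=2\,(1+w^{*})^{-1}(1-w^{*}w)(1+w)^{-1}>0$, i.e.\ $\Re(a)>0$.

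The only step I expect to require genuine care is the invertibility in (a): as the footnote stresses, $\Re(a)>0$ gives no information about where $\operatorname{spec}(a)$ sits, so the symmetric congruence $s^{-1}as^{-1}=1+ik$ is precisely what is needed, and one must resist arguing through $\operatorname{spec}(a)$ directly. An alternative for the whole lemma would be to fix a faithful unital representation $\A\subset B(H)$ and argue with Hilbert-space vectors throughout (for instance $\Re\langle a\xi,\xi\rangle\ge\epsilon\|\xi\|^{2}$ forces $\|a\xi\|\ge\epsilon\|\xi\|$, and likewise for $a^{*}$, giving bijectivity of $a$; the Cayley transform estimates in (c) and (d) then fall out of $\|(1\pm a)\eta\|^{2}=\|\eta\|^{2}\pm 2\Re\langle a\eta,\eta\rangle+\|a\eta\|^{2}$). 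This route is shorter but obscures the algebraic identities that make (c) and (d) transparent.
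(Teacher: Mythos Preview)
Your proof is correct and follows essentially the same route as the paper: the same square-root factorization $a=s(1+ik)s$ for (a), the same spectral argument for (b), and the same congruence identities $(1+a)(1-ww^{*})(1+a^{*})=4\Re(a)$ and its counterpart for (c) and (d). The only cosmetic differences are that for the second half of (a) you invoke the identity $2\Re(a^{-1})=(a^{*})^{-1}(2\Re a)\,a^{-1}$ directly rather than inverting the factorization, and in (d) you conjugate on the opposite side to land on $1-w^{*}w$ instead of $1-ww^{*}$; both are harmless variants.
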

\begin{proof}${}$
\begin{itemize}
	\item[(a)] This follows from a similar calculation as in \cite[p. 100]{BPV15}.
	Let $x=\Re(a)>0, y=\Im(a)$. Then 
	\[a=x+iy = (\sqrt{x})^2+iy = \sqrt{x} (1 + i  \sqrt{x}^{-1} y \sqrt{x}^{-1} ) \sqrt{x}=
	-i\sqrt{x} (i -  \sqrt{x}^{-1} y \sqrt{x}^{-1} ) \sqrt{x}.\] 
	As $\sqrt{x}^{-1} y \sqrt{x}^{-1}$ is self-adjoint, $i$ does not belong to its spectrum and thus 
	$i -  \sqrt{x}^{-1} y \sqrt{x}^{-1}$ is invertible. So $a$ is invertible.
	

	
	\item[(b)] Assume that $1-aa^*\geq \eps 1$. Then  $0\leq aa^* \leq (1-\eps)1$ and thus $\|aa^*\|\leq (1-\eps)<1$ (if $b$ is self-adjoint and $-c1\leq b\leq c1$, then $\|b\|\leq c$). 
	The $C^*$-property implies $\|a\|^2=\|aa^*\|<1$.\\
Conversely, assume that $\|a\| = 1-\eps < 1$. Then  $aa^*\leq \|aa^*\|1 = \|a\|^2 1 < (1-\eps)^21=:(1-\delta)1$. So $(1-\delta)1-aa^*\geq 0$, which shows $1-aa^*>0$.

\item[(c)] We have $bab^* = b\sqrt{a}\sqrt{a}b^* = (b\sqrt{a})(b\sqrt{a})^*\geq 0$.

\item[(d)] Let $\eps>0$ so small that $\eps(1+\|aa^*\|)1 \leq (4-2\eps)\Re(a)$. As $\eps(1+aa^*)\leq \eps(1+\|aa^*\|)1$, we have 

$(4-2\eps)\Re(a) - \eps(1+aa^*) \geq 0$, or $4\Re(a) -\eps(1+a)(1+a^*) \geq 0$. 
Furthermore, $4\Re(a)=2(a+a^*)=(1+a)b(1+a^*)$ with $b:=1-ww^*$. So $(1+a)(b-\eps 1)(1+a^*)\geq 0$.

Multiplying with $(1+a)^{-1}$ from the left and with $(1+a^*)^{-1}$ from the right yields with (c) that 
$b-\eps 1 \geq 0$ and (b) implies $\|w\|<1$.

\item[(e)] 

Let $\eps>0$ be so small that $2(1-ww^*) - \eps(1+w)(1+w^*)\geq 0$. 
As $(1+w)2\Re(b)(1+w^*)= (1-w)(1+w^*)+(1+w)(1-w^*)=2(1-ww^*)$, we have 
$(1+w)(2\Re(b)-\eps 1)(1+w^*)\geq 0$ and thus, with (c), also $2\Re(b)-\eps 1\geq 0$.
\end{itemize}
\end{proof}

Also $\A^{n\times n}$ can be regarded as a $C^*$-algebra (see \cite[p. 1-8]{Lan95}): We define $(a_{jk})^*=(a_{kj}^*)$. 
Furthermore, for $x=(x_1,...,x_n), y=(y_1,...,y_n)\in \A^n$, 
define the $\A$-valued inner product as 
$(x,y)=x_1^*y_1+...+x_n^*y_n$, and $\|x\|=\sqrt{(x,x)}$. For $a\in \A^{n\times n}$ we let 
$\|a\|=\sup\{\|(a(y),x)\|\,|\, \|x\|\leq 1, \|y\|\leq 1\}$, where 
$a(y)=(c_j), c_j=\sum_{k}a_{jk}y_k$. \\

\begin{definition}Let $\A, \B$ be both unital $C^*$-algebras with $\B\subset \A$. We call a linear map $\Phi:\A\to\B$ a 
\emph{$\B$-valued expectation} if
\begin{itemize}
\item[(a)]$\Phi(1)=1$, 
\item[(b)] $\Phi(bab')=b\Phi(a)b'$ for all $b,b'\in\B$ and $a\in \A$, and
\item[(c)] $\Phi$ is \emph{completely positive}: $ \Phi(A^*A) \geq 0$ for all $A\in \mathcal{A}^{n\times n}$ and all $n\in\N$, where we extend $\Phi$   componentwise to matrices.
\end{itemize}
 The triple $(\A,\B,\Phi)$ is called a 
($\B-$valued) \emph{non-commutative probability space} or \emph{quantum probability space}. 
\end{definition}

We note that the positivity of $\Phi$ implies $\Phi(a^*)=\Phi(a)^*$.\footnote{Write $a=x+iy$ with 
$x=\Re(a), y=\Im(a)$. Then $\Phi(a)=\Phi(x)+i\Phi(y)$. 
Furthermore, decompose $x$ and $y$ into their positive and negative parts, 
$x=x_+-x_-, y=y_+-y_-$. Then $\Phi(x_+), \Phi(x_-), \Phi(y_+), \Phi(y_-)$ are positive elements
and we conclude that $\Phi(x)$ and $\Phi(y)$ are self-adjoint, which implies 
$\Phi(a)^*=\Phi(x)-i\Phi(y)=\Phi(x-iy)=\Phi(a^*)$.}
%

\begin{example}
 An important example of a quantum probability space $(\A,\B,\Phi)$ is given by 
 the one-dimensional case $\B=\C$, and $\A$ is the set of all bounded linear operators on a Hilbert space $H$
 with  
 \[\Phi:\A\to\mathbb{C}, \quad \Phi(A)=\langle\xi, A\xi\rangle,\]
 where $\xi\in H$ is a fixed unit vector. 
\end{example}


\begin{definition}
The distribution $\mu_a$ of a random variable $a\in \A$ is defined as the map 
$\mu_a: \B\left<X\right>\to \B$, $\mu_a(p)=\Phi(p(a))$, where $\B\left<X\right>$ 
denotes the set of all non-commutative 
polynomials over $\B.$ 
\end{definition}

The distribution of $a$ is uniquely determined by the set of all mixed moments 
$\Phi(b_1 a b_2 \cdots a b_{n})$, $n\in\N$.


\begin{example}If $\B=\C$, then the distribution $\mu$ of a random variable $U$ 
is uniquely determined by the moments $\Phi(U^n)$, $n\geq 1$. 
In case of a unitary random variable $U$, we can describe such a distribution also by the moment 
generating function
\begin{equation}\label{mom}
 \psi_{\mu}(z)= \sum_{n=1}^\infty \Phi((Uz)^{n})=\sum_{n=1}^\infty \Phi(U^n) z^{n},\quad z\in\D.
\end{equation}
 We can also write 
 \[\psi_{\mu}(z)  = \int_{\partial\D}\frac{xz}{1-xz}\,\alpha(dx)\]
 for a probability measure $\alpha$ on $\partial\D$, and $\mu$ is uniquely determined by $\alpha$ 
 and vice versa.
 \end{example}

In case of a general $C^*$-algebra $\B$, we can also define the moment generating function 
\eqref{mom}, which is then defined for all $z$
belonging to the unit ball of $\B$. However, this function does not contain all mixed moments 
necessary to recover the distribution. This issue is solved by allowing $z$ to be from the unit ball of $\B^{n\times n}$, 
 the set of all $n\times n$-matrices over $\B$, for each $n\in\N$. If $U\in\A$ and $z\in \B^{n\times n}$, 
then $Uz\in \A^{n\times n}$ and $\Phi(Uz)\in \B^{n\times n}$. 
We recover $\Phi(Ub_1\cdots U b_{n})$ from 
\[b=\begin{pmatrix}0& b_1& 0& ...& 0 \\ 0& 0& b_2& ...& 0\\ & \vdots &&&&\\
     0&0&0&...&b_{n}\\ 0&0&0&...&0   
\end{pmatrix} \quad \text{and}\quad  \Phi((Ub)^n)=\begin{pmatrix}0& ...&0& \Phi(Ub_1Ub_2\cdots Ub_{n}) \\
0& ...&0& 0\\  & \vdots &&\vdots\\
     0&...&0&0
    
\end{pmatrix}.\]



For unitary $U\in \A$ with distribution $\mu$ we define the moment generating function $\psi_\mu$ by 
\begin{equation}\label{def:psi}
 \psi_\mu(z):=\Phi\left(\frac{Uz}{1-Uz}\right) = \sum_{m=1}^\infty \Phi((Uz)^{m}),
\end{equation}
which is defined for all $z \in \mathbb{B}_n:=\{z\in \B^{n\times n}\,|\, \|z\|<1\}$ and all $n\geq1$.\\
We have \[w=\frac{Uz}{1-Uz}=\frac{1}{2}\frac{1+Uz}{1-Uz}-\frac1{2}.\]
By Lemma \ref{calc} (e) we have 
$\Re(w)+\frac1{2}> 0$. So there exists $\eps>0$ (dependent on $w$) such that $\Re(w)+\frac1{2}\geq \eps 1$. As $\Phi(\Re(w))=\Re(\Phi(w))$ and because $\Phi$ is completely positive, we also 
have $\Re(\Phi(w+\frac1{2}-\eps 1))=\Re(\Phi(w))+\Re(\Phi(\frac1{2}-\eps 1))=\Re(\Phi(w))+\frac1{2}-\eps 1\geq 0$. So $\Re(\Phi(w))+\frac1{2}>0$.\\
So $z\mapsto \psi_\mu(z)$ maps the unit ball $\mathbb{B}_n$ into the half-space 
$\Re(w)>-\frac1{2}$. 
By Lemma \ref{calc} (d), we can now go back to the unit ball by applying again a linear fractional transformation.
The $\eta$-transform is defined by
\begin{equation}\label{def:eta}
  \eta_\mu:\cup_{n\geq 1}\mathbb{B}_n\to\cup_{n \geq 1}\mathbb{B}_n, \quad \eta_\mu(z)=\frac{\psi_\mu(z)}{1+\psi_\mu(z)}.
\end{equation}
We denote by $\eta_\mu^{n}$ the restriction of $\eta_\mu$ to $\mathbb{B}_n$. 
Then $\eta_\mu^n$ maps $\mathbb{B}_n$ into itself.


\section{Loewner chains from monotone increment processes}\label{sec_4}

The notion of independence is of vital importance for classical probability theory. In a certain sense, there are only five suitable notions of 
independence in the non-commutative setting (for $\B=\C$):
tensor, Boolean, free, monotone and anti-monotone independence;  see \cite{MR2016316}.\\
We now look at monotone independence, whose discovery can be traced back to
the construction of a monotone Fock space by N. Muraki (\cite{MR1467953, MR1462227})
and De. Giosa, Lu (\cite{MR1483010, MR1455615}) from the years 1996 and 1997. Around 2000, 
Muraki abstracted the computation rule for mixed moments of creation and annihilation operators on the monotone
Fock space and arrived at the concept of monotone independence in \cite{M00,M01,Mur01b}.
The operator-valued case has been considered in \cite{hs14}, \cite{AW16}, and \cite{Jek17}.\\

Let $(\A,\B,\Phi)$ be a quantum probability space. Furthermore, assume that $\B$ is finite dimensional. 
Any finite dimensional $C^*$-algebra is isomorphic to a subalgebra of $\C^{m\times m}$ for some $m$ large enough. 
In this situation each $\eta_\mu^n$ is a holomorphic function:\\ 

Let $b_1,...,b_N$ be a basis of $\B^{n\times n}$.  
Then $z\in \B^{n\times n}$ can be written as $\sum_{l=1}^N z_l b_l$ with complex variables $z_l$. In $\Phi((Uz)^m)$, we can place the variables $z_k$ outside of the argument of $\Phi$ to see that $\Phi((Uz)^m)$ is a polynomial in $z_1,...,z_N$. Thus $\eta_\mu^n$ is holomorphic.\\

For $m=1$ we have $\Phi(Uz)=\Phi(U)z$ by property (b) of $\Phi$. Thus, if $\Phi(U)=c\cdot 1$, $c\in\C$, then 
the Jacobian $D\eta_\mu^n(0)$ of $\eta_\mu^n$ at $0$ is given by $cI$.

\begin{definition}
A subset $C\subseteq \A$ is called a $\B$-subalgebra if $C$ is a subalgebra of $\A$ 
and $\B\cdot C\cdot \B\subseteq C$.\\
For $X\in \A$, we denote by $\B\left<X\right>_0$ the $\B$-subalgebra 
consisting of finite sums of elements of the form 
$b_1Xb_2X...Xb_{n+1}$, $n\geq 1$, $b_1,....,b_{n+1}\in\B$. ($\B\left<X\right>_0$ contains no degree-zero terms.)
\end{definition}

\begin{definition}${}$\label{def-mon}
Let $Q=(\A,\B,\Phi)$ be a non-commutative probability space. A family of $\B$-subalgebras 
$(\mathcal{A}_\iota)_{\iota\in I}$ of $\A$ 
indexed by a linearly ordered set $I$
is called \emph{monotonically independent} (in $Q$) if the following condition is satisfied:\\
For any $n\in\N$, $j_1, ..., j_n \in I$ and any $X_1\in\mathcal{A}_{j_1},...,X_n\in\mathcal{A}_{j_n}$, we have
\[
\Phi(X_{1}\cdots X_n) = \Phi(X_1\cdots X_{p - 1}\Phi(X_p)X_{p + 1}\cdots X_n)
\]
whenever $p$ is such that $j_{p - 1} < j_p > j_{p + 1}$. (One of the inequalities is eliminated if $p = 1$ or $p = n$.)\\
For $X_1,...,X_n\in\A$, the tuple $(X_1,...,X_n)$ is called monotonically independent if the
algebras $\A_1=\B\left<X_1\right>_0,..., 
\A_n=\B\left<X_n\right>_0$ are monotonically independent.
\end{definition}

\begin{example}
 Let $X\in\A$. Then $(X,1)$ is always monotonically independent as 
 $\B\left<1\right>_0=\B$.\\
 Now assume that  $(1,X)$ is monotonically independent. Then 
 \[\Phi(b_1X\cdots Xb_n)=\Phi(b_1X\cdots Xb_{n-1}Xb_n)=\Phi(b_1X\cdots Xb_{n-1}\Phi(X)b_n)=
 \Phi(b_1X\cdots Xb_{n-1})\Phi(X)b_n,\] which implies 
 $\Phi(b_1X\cdots Xb_n)=b_1\Phi(X)\cdots \Phi(X)b_n$. Thus the distribution $\mu$  of $X$ is simply the 
 delta distribution at $\Phi(X)$.
\end{example}


\begin{theorem}\label{conv}
Let $U,V\in \A$ be unitary operators with distributions $\mu$ and $\nu$ such that $(U-1,V)$ is monotonically independent. 
Denote by $\alpha$ the distribution of $UV$. Then we have 
\[\eta_{\alpha}(z) = \eta_{\mu}(\eta_\nu(z))\]
for all $z\in \cup_{n\geq 1}\mathbb{B}_n$.
\end{theorem}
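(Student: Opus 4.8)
The plan is to prove the convolution identity by computing mixed moments of $UV$ and organizing them so that the monotone independence of $(U-1,V)$ manifests itself as a composition of $\eta$-transforms. First I would work at the level of the moment generating function $\psi$ rather than $\eta$ directly, since the two are related by the fixed linear fractional transformation $\eta_\mu = \psi_\mu/(1+\psi_\mu)$, and a composition statement for $\eta$ is equivalent to the corresponding statement for the associated self-maps of the half-space $\{\Re(w)>1/2\}$ once one matches normalizations. The natural move is to expand $\psi_\alpha(z) = \sum_{n\geq1}\Phi\big(((UV)z)^n\big)$ and to insert $U = (U-1) + 1$ everywhere, so that each moment $\Phi(UVzUVz\cdots UVz)$ becomes a sum of terms in which some of the $U$'s are replaced by $U-1$ and the rest by $1$. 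The key structural observation is that whenever a factor $U-1$ (an element of $\A_1 = \B\langle U-1\rangle_0$) is surrounded by elements of $\A_2 = \B\langle V\rangle_0$ on both sides — which is exactly the "peak" situation $j_{p-1}<j_p>j_{p+1}$ in Definition \ref{def-mon} — monotone independence lets us replace that block $V^{\cdots}(U-1)V^{\cdots}$ by $V^{\cdots}\Phi(U-1)V^{\cdots}$, collapsing the word. Iterating this, every mixed moment of $UV$ reduces to an alternating composition built from the moments of $U$ (through $\psi_\mu$) evaluated at the moments of $V$ (through $\psi_\nu$).

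Concretely, the combinatorial heart is to show that if we set $w = \psi_\nu(z)$, which by the discussion after \eqref{def:psi} lands in the unit ball $\mathbb{B}_n$ after passing through the $\eta$-transform, then $\psi_\alpha(z) = \psi_\mu(\text{something involving }w\text{ and }z)$, and that the "something" is precisely what the $\eta$-transform bookkeeping predicts. I would make this rigorous by the standard trick in operator-valued monotone probability: use the recursion satisfied by the moments. Namely, write $(UVz)^n = UVz\,(UVz)^{n-1}$, peel off the leftmost $UV$, split $U = (U-1)+1$, and in the term with the leading $1$ we get $Vz(UVz)^{n-1}$ which repeats the pattern, while in the term with the leading $U-1$ we have a word starting with $(U-1)$ that is an element of $\A_1$ followed by a word in which $\A_1$ and $\A_2$ elements alternate; applying monotone independence at the first return to an $\A_1$ peak factors the moment. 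Summing the geometric-type series that results should yield exactly the functional equation $\psi_\alpha = \psi_\mu \circ (\text{transform determined by }\psi_\nu)$, which after converting $\psi$'s to $\eta$'s via Lemma \ref{calc}(c),(d) becomes $\eta_\alpha = \eta_\mu\circ\eta_\nu$.

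An alternative, possibly cleaner route avoids the explicit series manipulation: realize $U$ and $V$ inside a common quantum probability space where monotone independence is modeled on a monotone Fock-type construction (or use the characterization that monotone independence of $(U-1,V)$ means $\Phi$ restricted to the algebra generated by $V$ is "compressed" by the state coming from $U-1$). In that picture the resolvent $(1-UVz)^{-1}$ factors through a triangular/nested structure, and $\Phi$ of it telescopes into $\eta_\mu\circ\eta_\nu$ by a direct computation with $2\times2$-block matrices over $\B$ — the matrix amplification introduced before \eqref{def:psi} is exactly what lets us promote the scalar-type identity $\eta_{\mu\rhd\nu}=\eta_\mu\circ\eta_\nu$ (known for $\B=\C$, monotone convolution, cf. the references \cite{monotone}, \cite{Bau03}) to the operator-valued level. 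I would present whichever of the two is shorter given the machinery already set up in the paper.

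The main obstacle I anticipate is justifying the rearrangement of the moment series as an identity of honest holomorphic functions on $\cup_n\mathbb{B}_n$, rather than merely a formal power series identity: one must check that all the intermediate expressions (in particular $\psi_\nu(z)$ and its iterates) stay inside the appropriate unit balls so that the geometric series converge in norm and the linear fractional transformations from Lemma \ref{calc} are applicable. The norm bounds $\|\psi_\nu(z)\|$ control via $\Re(\psi_\nu(z))>-1/2$ (equivalently $\|\eta_\nu(z)\|<1$) are available from the computation following \eqref{def:psi}, so this should go through, but it requires care to phrase the induction so that at each stage the argument fed into $\psi_\mu$ is again in some $\mathbb{B}_m$. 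A secondary subtlety is the asymmetry in the hypothesis — it is $U-1$, not $U$, that is monotonically independent from $V$ — so I would double-check that the peak-collapsing step really only ever needs to move a factor $U-1$ past its $V$-neighbors and never a bare $U$; this is what forces the $U = (U-1)+1$ split and is the reason the statement is clean only for $\eta$ (where the "$-1$" is built in) rather than for $\psi$.
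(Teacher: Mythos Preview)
Your overall strategy --- reduce to showing $\psi_\alpha(z) = \psi_\mu(\eta_\nu(z))$, expand $\sum_{n\geq1}\Phi((UVz)^n)$, write $U = (U-1)+1$, and use monotone independence to factor the mixed moments --- is exactly the paper's proof. However, you have the peak direction backwards. In the tuple $(U-1,V)$ the ordered index set is $\{1<2\}$ with $\A_1 = \B\langle U-1\rangle_0$ and $\A_2 = \B\langle V\rangle_0$; the peak condition $j_{p-1}<j_p>j_{p+1}$ in Definition~\ref{def-mon} therefore singles out the $V$-blocks, not the $(U-1)$-blocks. Monotone independence lets you replace each maximal $V$-word $(Vz)^{\nu_j}$ by its expectation $\Phi((Vz)^{\nu_j})$, \emph{not} replace $U-1$ by $\Phi(U-1)$. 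With your reversed rule the mixed moments would collapse to those of $\Phi(U)\,V$ (a delta distribution at $\Phi(U)$ convolved with $\nu$), which is the wrong answer. Once the rule is applied in the correct direction, the word $(X+1)(Vz)^{\nu_1}X(Vz)^{\nu_2}\cdots X(Vz)^{\nu_k}$ (with $X=U-1$) becomes, under $\Phi$, the expression $\Phi\big((X+1)\Phi((Vz)^{\nu_1})X\Phi((Vz)^{\nu_2})\cdots X\Phi((Vz)^{\nu_k})\big)$, and summing over all compositions matches term-by-term with the expansion of $\Phi\big((X+1)\psi_\nu(z)(1-X\psi_\nu(z))^{-1}\big) = \psi_\mu(\eta_\nu(z))$, which is precisely how the paper concludes.

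On your convergence worry: the paper sidesteps any delicate norm tracking by observing that the geometric expansion $(1-X\psi_\nu(z))^{-1} = \sum_{k\geq0} (X\psi_\nu(z))^k$ is valid for $\|z\|$ small, so the identity of holomorphic functions holds near $0$ on each $\mathbb{B}_n$, and then extends to all of $\mathbb{B}_n$ by the identity principle. No inductive control of iterates is needed.
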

Hence the distribution $\alpha$ depends on $\mu$ and $\nu$ only and $\mu \rhd \nu := \alpha$ defines the \emph{multiplicative 
monotone convolution}. This convolution has been introduced for the case $\B=\C$ by Bercovici in 
\cite{B05}; see also \cite{franz06}.
\begin{proof}
The proof is analogous to the case $\B=\C$, see \cite[Theorem 4.1]{franz06}.\\ 
The statement is equivalent to $\psi_{\alpha}(z) = \psi_{\mu}(\eta_\nu(z))$ for all $z\in \mathbb{B}_n$ and all $n\geq 1$.\\
Let $X=U-1$ and write
 
  \begin{eqnarray*}
  &&\frac{UVz}{1-UVz}=\sum_{n=1}^\infty (UVz)^{n}=
  \sum_{n=1}^\infty (X+1)Vz...(X+1)Vz\\
  &=& \sum_{n=1}^\infty
  \sum_{k=1}^n \sum_{\substack{\nu_1+...+\nu_k=n\\ \nu_1,...,\nu_k\geq 1}} (X+1)(Vz)^{\nu_1}X(Vz)^{\nu_2}X...(Vz)^{\nu_k}.
 \end{eqnarray*}
 
Now we can use monotone independence:
 \begin{eqnarray*}
 &&\psi_\alpha(z)=\Phi\left(\frac{UVz}{1-UVz}\right)= 
    \sum_{n=1}^\infty\sum_{k=1}^n \sum_{\substack{\nu_1+...+\nu_k=n\\ \nu_1,...,\nu_k\geq 1}} 
    \Phi\left( (X+1)(Vz)^{\nu_1}X(Vz)^{\nu_2}X...(Vz)^{\nu_k}\right)\\
    &=&  \sum_{n=1}^\infty\sum_{k=1}^n \sum_{\substack{\nu_1+...+\nu_k=n\\ \nu_1,...,\nu_k\geq 1}} 
    \Phi\left((X+1)\Phi((Vz)^{\nu_1})X\Phi((Vz)^{\nu_2})X...\Phi((Vz)^{\nu_k})\right).
 \end{eqnarray*}
  Next, we have
  \begin{eqnarray*}
  && \frac{U\eta_\nu(z)}{1-U\eta_\nu(z)}=
  \left(U\frac{\psi_\nu(z)}{1+\psi_\nu(z)}\right)\left(1-U\frac{\psi_\nu(z)}{1+\psi_\nu(z)}\right)^{-1}\\
  &=& \left(U\psi_\nu(z)\right)\left(1+\psi_\nu(z)-U\psi_\nu(z)\right)^{-1}=
  (X+1)\psi_\nu(z)\left(1-X\psi_\nu(z)\right)^{-1}.
 \end{eqnarray*}
 Hence
   \begin{eqnarray*}
  &&  \psi_{\mu}(\eta_\nu(z))=\Phi\left(\frac{U\eta_\nu(z)}{1-U\eta_\nu(z)}\right)=
   \Phi\left((X+1) \sum_{n=1}^\infty 
  (\Phi((Vz)^{n})) \sum_{k=0}^\infty(X(\sum_{n=1}^\infty \Phi((Vz)^{n})))^{k}\right).
 \end{eqnarray*}
We see that this sum is indeed equal to $\psi_\alpha(z)$. Note that the expansion 
 $(1-X\psi_\nu(z))^{-1}=\sum_{k=0}^\infty (X\psi_\nu(z))^k$ only holds for 
 $\|z\|$ small enough, but the uniqueness of analytic continuation implies 
 that the two power series are indeed identical. 
\end{proof}
\begin{remark}\label{u_rm}
The monotone independence of $(U-1,V)$ is equivalent to the monotone independence of $(U-1,V-1)$.
\end{remark}

Next we introduce processes of unitary random variables with monotonically independent multiplicative increments. 
We refer to the books \cite{ABKL05} and \cite{MR2213451} for the general theory of classical and quantum processes 
with independent increments and quantum stochastic differential equations. 

%
\begin{definition}\label{def_umip}
Let $Q=(\A,\B,\Phi)$ be a non-commutative probability space and $(U_t)_{t\geq 0}\subseteq \A$ a
family of unitary random variables.  We  call $(U_t)_{t\geq 0}$ a
\emph{normalized unitary multiplicative monotone increment process (NUMIP)} if the following conditions are 
satisfied:
\begin{itemize}
\item[(a)] $U_0=1$ and $\Phi(U_t)=e^{-t}$ for all $t\geq 0$.
	\item[(b)] The tuple 
	\[(U_{t_1}^*U_{t_2}-1, ..., U_{t_{n-1}}^*U_{t_n}-1) \]
is monotonically independent for all $n\in\mathbb{N}$ and all $t_1,...,t_n\in\mathbb{R}$ with $0\leq t_1\leq t_2\leq ...\leq t_n$.
\end{itemize}

\end{definition}

\begin{theorem}\label{thm:2}
Let $\B$ be finite dimensional and let $(U_t)_{t\geq 0}\subseteq \A$ be a NUMIP with distributions $(\mu_t)_{t\geq 0}$. 
Then, for every $n\in\N$, $(\eta_{\mu_t}^n)_{t\geq0}$ is a normalized Loewner chain on 
$\mathbb{B}_n$ satisfying the differential equation \eqref{EV_Loewner}. \\
In particular, each $\eta_{\mu_t}^n:\mathbb{B}_n\to 
\eta_{\mu_t}^n(\mathbb{B}_n)$ is biholomorphic.
\end{theorem}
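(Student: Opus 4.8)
The plan is to exhibit, for each fixed $n$, the family $(\eta_{\mu_t}^n)_{t\ge0}$ as a subordination chain on $\mathbb{B}_n$ with the normalisation required by Theorem \ref{thm:0}, and then to read off the conclusion from that theorem. The substantive inputs — the composition rule for $\eta$-transforms under monotone convolution and the Loewner machinery — are already at our disposal, so the argument is mostly a matter of assembling them correctly.

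First I would record the increment distributions. For $0\le s\le t$ let $\mu_{s,t}$ denote the distribution of the unitary $U_s^*U_t$, so that $\mu_{0,t}=\mu_t$ by (a) of Definition \ref{def_umip}. Since $U_s^*U_u=(U_s^*U_t)(U_t^*U_u)$, and since by condition (b) of Definition \ref{def_umip} (taking $n=3$ with $t_1=s,t_2=t,t_3=u$) together with Remark \ref{u_rm} the pair $(U_s^*U_t-1,\,U_t^*U_u)$ is monotonically independent, Theorem \ref{conv} yields
\[
\eta_{\mu_{s,u}}=\eta_{\mu_{s,t}}\circ\eta_{\mu_{t,u}}\quad\text{on }\cup_{n\ge1}\mathbb{B}_n,
\]
and in particular $\eta_{\mu_u}=\eta_{\mu_t}\circ\eta_{\mu_{t,u}}$ for $0\le t\le u$. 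Restricting to level $n$ and using that the $\eta$-transform of any unitary maps $\mathbb{B}_n$ holomorphically into itself (Section 3), this displays $(\eta_{\mu_t}^n)_{t\ge0}$ as a subordination chain on $\mathbb{B}_n$ with transition maps $f_{t,u}=\eta_{\mu_{t,u}}^n$.

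Next I would verify the three normalisation conditions. Since $U_0=1$ one has $\psi_{\mu_0}(z)=\sum_{k\ge1}z^k=z(1-z)^{-1}$, whence a short computation gives $\eta_{\mu_0}(z)=z(1-z)^{-1}\bigl(1+z(1-z)^{-1}\bigr)^{-1}=z$, so $\eta_{\mu_0}^n=\mathrm{id}$. Clearly $\eta_{\mu_t}^n(0)=0$ because $\psi_{\mu_t}(0)=0$. Finally, the degree-one part of $\psi_{\mu_t}$ at the origin is $z\mapsto\Phi(U_tz)=\Phi(U_t)z=e^{-t}z$, using the $\B$-bimodule property of $\Phi$ and condition (a) of Definition \ref{def_umip}; since $\psi_{\mu_t}(0)=0$ and $\eta_{\mu_t}=\psi_{\mu_t}(1+\psi_{\mu_t})^{-1}$, it follows that $D\eta_{\mu_t}^n(0)=D\psi_{\mu_t}(0)=e^{-t}I$.

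It then remains to invoke Theorem \ref{thm:0}, and this is the one point where the hypothesis that $\B$ is finite dimensional is genuinely used: $\B^{n\times n}$ is then a finite-dimensional $C^*$-algebra, and transporting the operator norm $\|\cdot\|$ along a linear isomorphism $\B^{n\times n}\cong\C^N$ realises $\mathbb{B}_n$ as the open unit ball of a norm on $\C^N$, so that everything in Section 2 — compactness of $\M$ and the Loewner partial differential equation — applies verbatim with $B=\mathbb{B}_n$. By the two preceding paragraphs $(\eta_{\mu_t}^n)_{t\ge0}$ is a subordination chain on $\mathbb{B}_n$ with $f_0=\mathrm{id}$, $f_t(0)=0$ and $Df_t(0)=e^{-t}I$, so Theorem \ref{thm:0} shows it is a normalised (decreasing) Loewner chain and satisfies \eqref{EV_Loewner} for a suitable Herglotz vector field; in particular each $\eta_{\mu_t}^n:\mathbb{B}_n\to\eta_{\mu_t}^n(\mathbb{B}_n)$ is biholomorphic. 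I do not expect a serious obstacle beyond this bookkeeping — the only care needed is in transferring the $\C^n$-statements of Section 2 to the ball of the matrix algebra $\B^{n\times n}$ and in checking the first-order normalisation.
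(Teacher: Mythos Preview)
Your proof is correct and follows essentially the same route as the paper: both arguments introduce the increment distributions $\mu_{s,t}$ of $U_s^*U_t$, use Theorem \ref{conv} together with Remark \ref{u_rm} to obtain the composition rule $\eta_{\mu_{s,u}}=\eta_{\mu_{s,t}}\circ\eta_{\mu_{t,u}}$, verify the normalisation $\eta_{\mu_0}^n=\mathrm{id}$, $\eta_{\mu_t}^n(0)=0$, $D\eta_{\mu_t}^n(0)=e^{-t}I$, and conclude via Theorem \ref{thm:0}. Your write-up is slightly more detailed (in particular you make explicit why finite-dimensionality of $\B$ is needed to place $\mathbb{B}_n$ in the framework of Section 2), but the logical skeleton is identical.
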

\begin{proof}
Denote by $\eta_{s,t}$ the $\eta$-transform of the distribution of $U_s^*U_t$, $s\leq t$. Let $0\leq s\leq t \leq u$. Then we have $ U_s^*U_u=   U_s^*U_tU_t^*U_u$. As $(U_s^*U_t-1, U_t^*U_u-1)$ is 
 monotonically independent, Theorem \ref{conv} and Remark \ref{u_rm} imply 
 \[ \eta_{s,u} = \eta_{s,t}\circ \eta_{t,u}.\]

 Fix $n\in\N$. Then $\eta^n_{0,0}(z)=z$ and the power series expansion of $\eta^n_{0,t}$ shows 
 $D\eta^n_{0,t}(0)=\Phi(U_t)I=e^{-t}I$, $\eta^n_{0,t}(0)=0$. Theorem \ref{thm:0} implies that $(\eta_{0,t}^n)_{t\geq 0}=(\eta_{\mu_t}^n)_{t\geq 0}$ is a Loewner chain.
\end{proof}

\begin{example}
Consider a quantum probability space $(\A,\C,\Phi)$ and let $h$ be the Haar measure on $\partial \D$.
Let $U\in\A$ be unitary with distribution $h$. 
The constant process $U_t\equiv U$ satisfies condition (b) from Definition \ref{def_umip}, but not (a).
We have $\psi_h(z)= \sum_{n=1}^\infty \Phi(U^n)z^n \equiv 0$ and thus $\eta_h(z)\equiv 0$ is not injective. 
\end{example}

\begin{remark}\label{inf_rem}(Infinite dimensions) 
The definition of NUMIPs is valid for general $C^*$-algebras and also Loewner chains can be defined 
on an infinite dimensional Banach space. If this Banach space is reflexive, then a Loewner chain also 
satisfies a Loewner differential equation, see \cite{GHKK13}. However, 
reflexive $C^*$-algebras are already finite-dimensional, see \cite[Theorem 6.3.6.15]{Con01}.\\
Reflexivity is needed in \cite{GHKK13} due to problems with absolute continuity and integrability, see 
\cite[Lemma 2.8]{GHKK13}. \\
In \cite{Jek17}, Loewner chains on non-reflexive Banach spaces are described via a distributional 
differential equation. It should be expected that Theorem \ref{thm:2} also holds in the general case.\\
For semigroups of univalent functions on $C^*$-algebras, see also \cite{EHRS02}.
 \end{remark} 
 
\section{Multivariate processes}

Fix a ``one-dimensional'' quantum probability space $(\A,\C,\Phi)$. 

We have seen that passing to an operator-valued expectation leads
to (bi)holomorphic mappings in several variables. This is also the case by regarding multivariate quantum processes in
$(\A,\C,\Phi)$.\\

Let $a,b\in \A$ be unitary. The distribution of the pair $(a,b)\in\A^2$ is defined by the set of all moments 
$\Phi(a^{j_1}b^{k_1}...a^{j_n}b^{k_n})$, $j_1,k_1,...,j_n,k_n\in\N\cup\{0\}$. This distribution can also be encoded via operator-valued expectation. 
Let $X=\diag(a,b)\in\A^{2\times 2}$. Then $X$ is a unitary random variable in 
the quantum probability space $(\A^{2\times 2}, \C^{2\times 2}, \Phi \otimes I_{\C^{2\times 2}})$. 
The distribution $\mu$ of $X$ is determined by the distribution of $(a,b)$ and vice versa.\\
Moreover, we can define monotone independence of $((a_1,b_1), (a_2,b_2))$ by the monotone independence 
of $(\diag(a_1, b_1),\diag(a_2,b_2))$. \\
 This is equivalent to the monotone independence of the $\C$-subalgebras 
$(\C\left<a_1,b_1\right>_0, \C\left<a_2,b_2\right>_0)$, where $\C\left<a,b\right>_0$ 
denotes the set of all finite sums of elements of the form $ca^{j_1}b^{k_1}...a^{j_n}b^{k_n}$, 
$c\in\C$, $n\geq 1$, $j_1,k_1,...,j_n,k_n\in\N\cup\{0\}$, and at least one exponent is greater than $0$.\\

Now assume that $a$ and $b$ commute. Then it suffices to consider $\eta_\mu$ only on 
$\mathbb{B}_1=\{z\in \C^{2\times 2}\,|\, \|z\|<1\}$. 
Even more is true. As observed in \cite[Section 3]{BBGS18} (in the case of self-adjoint operators), 
it is in fact sufficient to consider $\eta_\mu$ only on 
$\mathbb{B}^\Delta_1=\{z\in \mathbb{B}_1\,|\, \text{$z$ is upper triangular}\}$.\\ 
This can be seen as follows. It suffices to consider the moments $\Phi(a^jb^k)$, $j,k\geq0$.
Denote by $\alpha$ and $\beta$ the distribution of $a$ and $b$ within $(\A,\C,\Phi)$.
The distribution $\mu$ of $X$ is completely described by the following three functions
\[\psi_\alpha(z)=\Phi\left(\frac{az}{1-az}\right), \psi_\beta(w)= \Phi\left(\frac{bw}{1-bw}\right),
 \psi(z,w) = \Phi\left(\frac{a}{(1-az)(1-bw)}\right),\]
where $z,w\in\D$. Note that $\frac{a}{1-az}\cdot \frac{1}{1-bw}=\sum_{k\geq 0, l\geq 0}a^{k+1}b^lz^{k}w^l$.\\

Now consider $\psi_\mu(c)=\Phi(\frac{Xc}{1-Xc})$ for $c=\begin{pmatrix} z&\zeta\\0&w\end{pmatrix}\in\C^{2\times 2}$ 
with $\|c\|<1$. We have 
\[\frac{Xc}{1-Xc}=\begin{pmatrix} 
\frac{az}{1-az}&\zeta\frac{a}{(1-az)(1-bw)}\\0&\frac{bw}{1-bw}
\end{pmatrix} \quad \text{and thus} \quad 
\psi_\mu(c)=\Phi\left(\frac{Xc}{1-Xc}\right)=\begin{pmatrix} 
\psi_{\alpha}(z)&\zeta\psi(z,w)\\0&\psi_\beta(w)
\end{pmatrix}.\] This leads to
\[ (1+\psi_\mu(c))^{-1}=\begin{pmatrix} 
1+\psi_{\alpha}(z)&\zeta\psi(z,w)\\0&1+\psi_\beta(w)
\end{pmatrix}^{-1}= \begin{pmatrix} 
\frac1{1+\psi_{\alpha}(z)}&-\frac{\zeta\psi(z,w)}{(1+\psi_{\alpha}(z))(1+\psi_\beta(w))}\\0&\frac1{1+\psi_\beta(w)}
\end{pmatrix}\]
and
\[\eta_\mu(c) =
\begin{pmatrix} \eta_{\alpha}(z)&\zeta(1-\eta_\alpha(z))(1-\eta_\beta(w))\psi(z,w)\\0&\eta_\beta(w)\end{pmatrix}.\]
 
We see that $\eta_\mu$ maps $\mathbb{B}^\Delta_1$ into itself and that $\eta_\mu$ restricted 
to $\mathbb{B}^\Delta_1$ encodes the distribution $\mu$. Note that $\mathbb{B}^\Delta_1$ 
can be regarded as a unit ball in $\C^3$. With Theorem \ref{thm:2} we now obtain a Loewner chain 
on $\mathbb{B}^\Delta_1$ as follows.
 
\begin{theorem}\label{multi}Let $(a_t)_{t\geq0}$ and $(b_t)_{t\geq0}$ be two families of unitary random variables 
in $\A$ such that $a_t, b_t$ commute for every $t\geq0$. 
Let $X_t=\diag(a_t,b_t)$ with distribution $\mu_t$. Assume that $(X_t)_{t\geq 0}$ is a NUMIP in 
$(\A^{2\times 2}, \C^{2\times 2}, \Phi \otimes I_{\C^{2\times 2}})$. Then
$(\eta_{\mu_t})_{t\geq0}$ is a normalized Loewner chain on $\mathbb{B}^\Delta_1$. \\
In particular, each $\eta_{\mu_t}:\mathbb{B}^\Delta_1\to 
\eta_{\mu_t}(\mathbb{B}^\Delta_1)$ is biholomorphic.
\end{theorem}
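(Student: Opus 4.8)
The plan is to obtain Theorem~\ref{multi} from Theorem~\ref{thm:2} in the case $n=1$, applied to the $C^*$-algebra $\C^{2\times2}$, by restricting the resulting Loewner chain on $\mathbb{B}_1\subset\C^{2\times2}$ to the complex $3$-dimensional ball $\mathbb{B}^\Delta_1=\mathbb{B}_1\cap T$, where $T\subset\C^{2\times2}$ denotes the linear subspace of upper triangular matrices. Since $(X_t)$ is a NUMIP in $(\A^{2\times2},\C^{2\times2},\Phi\otimes I_{\C^{2\times2}})$, Theorem~\ref{thm:2} says that $(\eta_{\mu_t})_{t\ge0}$ is a normalized Loewner chain on $\mathbb{B}_1$ satisfying \eqref{EV_Loewner} for some Herglotz vector field $M(z,t)$ on $\mathbb{B}_1$. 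Because $X_t=\diag(a_t,b_t)$ is diagonal, $X_tc$, hence $(X_tc)^n$, hence $\psi_{\mu_t}(c)$ and $\eta_{\mu_t}(c)=\psi_{\mu_t}(c)(1+\psi_{\mu_t}(c))^{-1}$, are upper triangular whenever $c$ is; thus $\eta_{\mu_t}(\mathbb{B}^\Delta_1)\subseteq\mathbb{B}^\Delta_1$ for all $t$. (The hypothesis that $a_t$ and $b_t$ commute is what guarantees, via the discussion preceding the theorem, that this restriction loses no information about $\mu_t$; it is not otherwise needed below.) In particular $D\eta_{\mu_t}(z)$ maps $T$ into $T$, and since $D\eta_{\mu_t}(z)$ is invertible on $\C^{2\times2}$ and $\dim T<\infty$, the restriction $D\eta_{\mu_t}(z)|_T\colon T\to T$ is a linear isomorphism.

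The key step is to show that $M(z,t)\in T$ for every $z\in\mathbb{B}^\Delta_1$ and a.e.\ $t\ge0$. The Vitali argument in the proof of Theorem~\ref{thm:0} provides a single null set outside of which \eqref{EV_Loewner} holds simultaneously for all $z\in\mathbb{B}_1$; fix such a $t$ and a point $z\in\mathbb{B}^\Delta_1$. Since $s\mapsto\eta_{\mu_s}(z)$ is $T$-valued, $\partial_t\eta_{\mu_t}(z)\in T$, and therefore, by \eqref{EV_Loewner},
\[
M(z,t)=\bigl(D\eta_{\mu_t}(z)\bigr)^{-1}\partial_t\eta_{\mu_t}(z)=\bigl(D\eta_{\mu_t}(z)|_T\bigr)^{-1}\bigl(\partial_t\eta_{\mu_t}(z)\bigr)\in T .
\]

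It remains to assemble the pieces. Regarding $\mathbb{B}^\Delta_1$ as a unit ball in $\C^3$, the restriction $\widetilde M(z,t):=M(z,t)$, $z\in\mathbb{B}^\Delta_1$, is a Herglotz vector field on $\mathbb{B}^\Delta_1$: the normalizations $\widetilde M(0,t)=0$ and $D\widetilde M(0,t)=-I$ are inherited from $\mathbb{B}_1$, and for $z\ne0$ the inequality $\Re\bigl(l_z(\widetilde M(z,t))\bigr)<0$ follows by extending a norming functional $l_z$ on $T$ to a functional on $\C^{2\times2}$ of the same norm by Hahn--Banach and invoking the Herglotz property of $M$ on $\mathbb{B}_1$ (here one uses $\widetilde M(z,t)\in T$). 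By the converse part of Theorem~\ref{thm:0} applied on $\mathbb{B}^\Delta_1$, the unique absolutely continuous solution of \eqref{EV_Loewner} on $\mathbb{B}^\Delta_1$ driven by $\widetilde M$ is a normalized decreasing Loewner chain. Now $(\eta_{\mu_t}|_{\mathbb{B}^\Delta_1})_{t\ge0}$ is such a solution: it is the restriction of the solution on $\mathbb{B}_1$, absolutely continuous in $t$, it satisfies \eqref{EV_Loewner} with $\widetilde M$ (because $M(z,t)\in T$ and $D\eta_{\mu_t}(z)|_T$ is exactly the differential of the restricted map), and it carries the normalization $\eta_{\mu_0}=\mathrm{id}$, $\eta_{\mu_t}(0)=0$, $D\eta_{\mu_t}(0)|_T=e^{-t}I$. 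Uniqueness then identifies the two, so $(\eta_{\mu_t})_{t\ge0}$ is a normalized Loewner chain on $\mathbb{B}^\Delta_1$ satisfying \eqref{EV_Loewner} with the upper triangular Herglotz vector field $\widetilde M$, and in particular each $\eta_{\mu_t}\colon\mathbb{B}^\Delta_1\to\eta_{\mu_t}(\mathbb{B}^\Delta_1)$ is biholomorphic.

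The step I expect to be most delicate is ensuring that the exceptional set of times in the key step can be taken independent of $z$ --- which is precisely what the Vitali argument in the proof of Theorem~\ref{thm:0} supplies --- together with checking that the Hahn--Banach extension preserves both $l_z(z)=\|z\|$ and $\|l_z\|=1$ when passing from $T$ to $\C^{2\times2}$, so that $\widetilde M$ genuinely qualifies as a Herglotz vector field on the smaller ball. Everything else reduces to Theorems~\ref{thm:2} and~\ref{thm:0} and the elementary fact that an invertible linear map restricts to an isomorphism of any invariant subspace.
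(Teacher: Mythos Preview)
Your proof is correct and follows the same overall strategy as the paper: apply Theorem~\ref{thm:2} on $\mathbb{B}_1$, observe that everything preserves the upper-triangular subspace $T$, and descend to $\mathbb{B}^\Delta_1$. The one substantive difference is in how the upper-triangularity of $M(z,t)$ is obtained. The paper goes back to the explicit construction of $M$ via the difference quotient \eqref{diff_quotient}: since all transition maps $\eta_{s,t}$ send $T$ to $T$, the quotients $(v_{t,u}(z)-z)/(u-t)$ lie in $T$ whenever $z\in T$, and hence so does their limit $H(z,t)=M(z,T-t)$. You instead argue from the PDE itself, using that $D\eta_{\mu_t}(z)$ is invertible on $\C^{2\times2}$ and leaves $T$ invariant, so its inverse sends $\partial_t\eta_{\mu_t}(z)\in T$ back into $T$. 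Both are valid; the paper's route is shorter because it avoids needing the biholomorphicity of $\eta_{\mu_t}$ as an input to the triangularity claim, while your route makes the logical dependencies more transparent and, via the Hahn--Banach and uniqueness steps, spells out details the paper leaves to the phrase ``we obtain a Loewner equation on $\mathbb{B}^\Delta_1$''.
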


\begin{remark}
 In the theorem above, we expressed the monotone independence of two pairs \\
 $((a_1,b_1), (a_2,b_2))$ by the monotone independence of $(\diag(a_1, b_1),\diag(a_2,b_2))$.\\
 However, there are further ways to define such an independence. 
 We refer to \cite{Ger17}, \cite{GHS17} and the references therein 
 for notions of monotone independence for pairs of random variables (bi-monotone independence). 
\end{remark}

\end{document}